\numberwithin{equation}{section}            
\theoremstyle{plain}
\newtheorem{thm}{Theorem}[section]
\newtheorem{prop}[thm]{Proposition}
\newtheorem{lem}[thm]{Lemma}
\newtheorem{cor}[thm]{Corollary}
\theoremstyle{remark}
\newtheorem{rema}[thm]{Remark}
\newcommand{\barB}{\overline{\phantom{m}}^B}
\newcommand{\barU}{\overline{\phantom{m}}}
\newcommand{\Bc}{{\mathcal{B}_{\bc}}}
\newcommand{\bc}{{\mathbf{c}}}
\newcommand{\bs}{{\bf{s}}}
\newcommand{\N}{{\mathbb N}}
\newcommand{\cB}{{\mathcal B}}
\newcommand{\cC}{{\mathcal C}}
\newcommand{\cF}{{\mathcal F}}
\newcommand{\cM}{{\mathcal M}}
\newcommand{\cO}{{\mathcal O}}
\newcommand{\cS}{{\mathcal S}}
\newcommand{\cV}{{\mathcal V}}
\newcommand{\field}{{\mathbb K}}
\newcommand{\gfrak}{{\mathfrak g}}
\newcommand{\hfrak}{{\mathfrak h}}
\newcommand{\height}{\mathrm{ht}}
\newcommand{\id}{{\mathrm{id}}}
\newcommand{\ot}{\otimes}
\newcommand{\sU}{\mathscr{U}}
\newcommand{\Uq}{U}
\newcommand{\uqg}{{U_q(\mathfrak{g})}}
\newcommand{\vep}{\varepsilon}
\newcommand{\Vect}{{\cV}ect}
\newcommand{\Xfrak}{{\mathfrak{X}}}
\newcommand{\Z}{{\mathbb Z}}
\begin{document}
\title[The bar involution for quantum symmetric pairs]
{The bar involution for quantum symmetric pairs -- hidden in plain sight}
\author[Stefan Kolb]{Stefan Kolb}
\dedicatory{To Jasper Stokman on his 50th birthday in gratitude and friendship}
\address{School of Mathematics, Statistics and Physics,
Newcastle University, Newcastle upon Tyne NE1 7RU, United Kingdom}
\email{stefan.kolb@newcastle.ac.uk}

%
\keywords{Quantum symmetric pairs, bar involution}
\subjclass[2010]{Primary: 17B37, Secondary: 53C35, 16T05, 17B67}
\begin{abstract}
 We show that all quantum symmetric pair coideal subalgebras $\cB_\bc$ of Kac-Moody type have a bar involution for a suitable choice of parameters $\bc$. The proof relies on a generalized notion of quasi $K$-matrix. The proof does not involve an explicit presentation of $\cB_\bc$ in terms of generators and relations.
\end{abstract}
\maketitle

\section{Introduction}
Let $\gfrak$ be a symmetrizable Kac-Moody algebra and $\theta:\gfrak\rightarrow \gfrak$ an involutive Lie algebra automorphism of the second kind. The theory of quantum symmetric pairs, as developed by G.~Letzter in the finite setting \cite{a-Letzter99a}, studies certain coideal subalgebras $\cB_\bc$ of the quantized enveloping algebra $\Uq=\uqg$. The coideal subalgebras $\cB_\bc$ depend on a family of parameters $\bc$. They are quantum group analogs of $U(\gfrak^\theta)$ where $\gfrak^\theta=\{x\in \gfrak\,|\,\theta(x)=x\}$.

In March 2012, C.~Stroppel asked the author if there exists a bar involution for $\cB_\bc$. She conjectured that $\cB_\bc$ should allow an algebra automorphism $\barB:\cB_\bc\rightarrow \cB_\bc$ which is the identity on the Letzter generators $B_i\in \cB_\bc$ and maps $q$ to $q^{-1}$. Unfortunately, back in 2012, this author did not sufficiently appreciate the question.

This changed drastically with the appearance of the preprint versions of \cite{a-BaoWang18a} and \cite{a-ES18} in October 2013. Both these papers contained a bar involution for an explicit example of $\cB_\bc$ of type AIII. For this example, H.~Bao and W.~Wang used the bar involution to construct an element $\Xfrak$ (denoted by $\Upsilon$ in \cite{a-BaoWang18a}), now called the quasi $K$-matrix for $\cB_\bc$, which intertwines the Lusztig bar involution $\barU$ for $\Uq$ and the new bar involution $\barB$ for $\cB_\bc$ as follows
\begin{align}\label{eq:quasiK-intro}
   \overline{b}^B\Xfrak= \Xfrak \overline{b} \qquad \mbox{for all $b\in \cB_\bc$.}
\end{align}
Moreover, Bao and Wang used the quasi $K$-matrix to construct a natural family of $\cB_\bc$-module automorphisms $\mathcal{T}$ on the category of finite-dimensional  $\Uq$-modules. At this point it became clear that the family $\mathcal{T}$ was a hot candidate for a universal $K$-matrix which the author had been chasing previously in joint work with J.~Stokman \cite{a-KolbStok09}. Subsequently, this was worked out jointly with M.~Balagovi\'c \cite{a-BalaKolb19}.

The line of argument, first proposed in \cite{a-BaoWang18a} and then performed in \cite{a-BalaKolb15}, \cite{a-BalaKolb19} in a general Kac-Moody setting, was as follows. First the existence of a bar involution on $\cB_\bc$ was established via an explicit presentation of $\cB_\bc$ in terms of generators and relations. Then it was proved that the defining relation \eqref{eq:quasiK-intro} of the quasi $K$-matrix has an essentially unique solution.

Writing down $\cB_\bc$ in terms of generators and relations is a difficult problem in the general Kac-Moody setting. The quest for the bar involution for $\cB_\bc$ was often cited as an important motivation to find such a presentation of $\cB_\bc$, see for example the introductions of the papers \cite{a-CLW20}, \cite{a-dC19p}. However, G.~Lusztig asked the author in January 2019 if there is a construction of the bar involution for $\cB_\bc$ which does not rely on a presentation of $\cB_\bc$ in terms of generators and relations. Lusztig pointed out that the construction of the bar involution for the positive part $U^+$ in \cite[1.2.12]{b-Lusztig94} does not involve explicit knowledge of the quantum Serre relations for $\Uq$.

The quasi $K$-matrix is the quantum symmetric pair analog of the quasi $R$-matrix for $\Uq$. The quasi $R$-matrix for $\Uq$ has a description which does not involve Lusztig's bar involution, see \cite[Theorem 4.1.2 (b)]{b-Lusztig94}. Hence it is natural to ask for a bar involution free description of the quasi $K$-matrix. Such a construction was given in the quasi-split case (and in a more general setting of Nichols algebras of diagonal type) in \cite{a-KY20}. For suitable parameters $\bc$ there exists an element $\Xfrak$ satisfying a relation similar to \eqref{eq:quasiK-intro}, and the map
\begin{align*}
  b\mapsto \Xfrak \overline{b} \Xfrak^{-1} \qquad \mbox{for all $b\in \cB_\bc$}
\end{align*}
is a bar involution on $\cB_\bc$. The element $\Xfrak$ in \cite{a-KY20} is given by $\Xfrak=(\vep\ot \id)(\Theta^\theta)$ where $\vep$ is the counit and the element $\Theta^\theta$ is defined by \cite[(6.1)]{a-KY20}, see also \cite[Remark 6.3]{a-KY20}.

Along a slightly different line, A.~Appel and B.~Vlaar observed in \cite{a-AV20p} that the construction of the quasi $K$-matrix in \cite{a-BalaKolb19} can be reformulated without the bar involution $\barB$. Recall that $\cB_\bc$ is defined for a (generalized) Satake diagram $(X,\tau)$, where $X$ is a subset of the nodes $I$ of the Dynkin diagram of $\gfrak$, and $\tau:I\rightarrow I$ is an involutive diagram automorphism. The pair $(X,\tau)$ needs to satisfy certain compatibility conditions, see Section \ref{sec:genSatake}. The Letzter generators of $\cB_\bc$ are given by $B_i=F_i$ for $i\in X$ and $B_i=F_i-c_iT_{w_X}(E_{\tau(i)})K_i^{-1}$ for $i\in I\setminus X$, see Section \ref{sec:QSP}. With this notation, condition \eqref{eq:quasiK} is equivalent to the condition
\begin{align*}
  B_i \Xfrak = \Xfrak \overline{B_i'} \qquad \mbox{for all $i\in I$}
\end{align*}
where $B_i$ are the Letzter generators corresponding to the parameter family $\bc$, and $B_i'$ are Letzter generators corresponding to a different parameter family $\bc'$. One then obtains an algebra isomorphism $\Psi:\cB_{\bc'}\rightarrow \cB_{\bc}$ defined by $\Psi(b)=\Xfrak\overline{b}\Xfrak^{-1}$. This isomorphism satisfies $\Psi(B_i')=B_i$ and $\Psi(q)=q^{-1}$.

In \cite{a-AV20p} Appel and Vlaar are mostly concerned with the bar involution free formulation of the quasi $K$-matrix $\Xfrak$. The main point of the present note, which we still believe has novelty to it, is the observation that the above argument provides a proof of the existence of the bar involution for all symmetrizable Kac-Moody algebras and all (generalized) Satake diagrams if the parameters are chosen such that $\bc'=\bc$, see Corollary \ref{cor:bar-revisited}. Although Appel and Vlaar get very close to this observation, they stop just short of the formulation of the existence of the bar involution in \cite[Remark 7.2]{a-AV20p}. In particular, with the new perspective on the quasi $K$-matrix, the study of the quantum Serre relations for $\cB_\bc$ mentioned in \cite[Remark 7.2]{a-AV20p} is not necessary anymore.

This note is organised as follows. In Section \ref{sec:prelims} we fix notation for quantized enveloping algebras and recall the notion of a generalized Satake diagram as formulated in \cite{a-RV20} which generalizes the notion of an admissible pair in \cite[Definition 2.3]{a-Kolb14}. We promote the view that future papers on quantum symmetric pairs should be written in the setting of generalized Satake diagrams. In Section \ref{sec:quasiK} we recall the definition of $\cB_\bc$ and reprove the existence of the quasi $K$-matrix $\Xfrak$. While this result is contained in \cite[Theorem 7.4]{a-AV20p}, we feel that it is worthwhile to reproduce it in the precise notations and setting of \cite{a-BalaKolb19}. Section \ref{sec:bar-revisited} contains the existence of the isomorphism $\Psi$. As a consequence we obtain the main message of this note, namely the general, relation-free existence proof for the bar involution on $\cB_\bc$, see Corollary \ref{cor:bar-revisited} 
\section{Preliminaries}\label{sec:prelims}
\subsection{Quantum groups}
Let $\gfrak$ be a symmetrizable Kac-Moody algebra with generalized Cartan matrix $(a_{ij})_{i,j\in I}$ where $I$ is a finite set and Cartan subalgebra $\hfrak\subset \gfrak$. Let $\{d_i\,|\,i\in I\}$ be a set of relatively prime positive integers such that the matrix $(d_ia_{ij})$ is symmetric. Let $\Pi=\{\alpha_i\,|\,i\in I\}$ be the set of simple roots for $\gfrak$, let $\Phi$ be the root system, and let $Q=\Z\Pi$ be the root lattice. For $\beta=\sum_{i\in I}n_i\alpha_i\in Q$ we write $\height(\beta)=\sum_{i\in I}n_i$. Consider the symmetric bilinear form $(\cdot,\cdot):Q\times Q \rightarrow \Z$ defined by $(\alpha_i,\alpha_j)=d_ia_{ij}$ for all $i,j\in I$. We denote $Q^+=\N_0 \Pi$ where $\N_0=\{0,1,2, \dots\}$. Let $W$ be the Weyl group of $\gfrak$ which is generated by the simple reflections $s_i$ for $i\in I$.

Throughout this paper let $k$ be a field of characteristic zero and let $\field=k(q)$ be the field of rational functions in a variable $q$. We consider the quantized enveloping algebra $\Uq=\uqg$ as the $\field$-algebra with generators $E_i, F_i, K_i^{\pm 1}$ for $i\in I$ subject to the relations given in \cite[3.1.1]{b-Lusztig94}.  For $\beta=\sum_{i\in I}n_i\alpha_i\in Q$ we write $K_\beta=\prod_{i\in I}K_i^{n_i}$. Let $U^+$ and $U^-$ denote the subalgebras of $U$ generated by $\{E_i\,|\,i\in I\}$ and $\{F_i\,|\,i\in I\}$, respectively. For any $\mu\in Q^+$ we write $U^+_\mu=\mathrm{span}_\field\{E_{i_1}\dots E_{i_\ell}\,|\, \sum_{j=1}^\ell \alpha_{i_j}=\mu\}$ where $\mathrm{span}_\field$ denotes the $\field$-linear span. Moreover, define $U^-_{-\mu}=\omega(U^+_\mu)$ where $\omega:\Uq\rightarrow \Uq$ denotes the algebra automorphism given in \cite[3.1.3]{b-Lusztig94}. For any $i\in I$ let $T_i:\Uq\rightarrow \Uq$ denote the algebra automorphism denoted by $T_{i,1}''$ in \cite[37.1]{b-Lusztig94}. By \cite[39.4.3]{b-Lusztig94} the automorphisms $T_i$ satisfy braid relations.  Hence, for $w\in W$ with reduced expression $w=s_{i_1}\dots s_{i_\ell}$ there is a well-defined automorphism $T_w=T_{i_1}\dots T_{i_\ell}$ of $\Uq$.
For any $i\in I$ let $r_i, {}_ir: U^+\rightarrow U^+$ denote the Lusztig-Kashiwara skew-derivations which are defined uniquely by the relation
\begin{align}\label{eq:skew-def+}
  xF_i-F_ix=\frac{1}{q_i-q_i^{-1}}\big(r_i(x) K_i - K_i^{-1}{}_ir(x) \big) \qquad \mbox{for all $x\in U^+$}
\end{align}
where $q_i=q^{d_i}$, see \cite[3.1.6]{b-Lusztig94}. The maps $r_i$ and ${}_ir$ satisfy the relations $r_i(E_j)={}_ir(E_j)=\delta_{i,j}$ and the skew-derivation properties
\begin{align}\label{eq:skew-derivations}
  r_i(xy) = q^{(\alpha_i,\nu)}r_i(x)y + x r_i(y), \qquad 
  {}_ir(xy) = {}_ir(x) y + q^{(\alpha_i,\mu)} x\, {}_ir(y)
\end{align}
for all $x\in U^+_\mu$, $y\in U^+_\nu$.

Recall that the bar involution for $\Uq$ is the $k$-algebra automorphism $\barU:\Uq\rightarrow \Uq$, $x \mapsto \overline{x}$ defined by
\begin{align*}
  \overline{E_i}=E_i, \quad  \overline{F_i}=F_i, \quad  \overline{K_i}=K_i^{-1}, \quad \overline{q}=q^{-1}
\end{align*}  
for all $i\in I$.

To formulate the properties of the quasi $K$-matrix we need to work in an overalgebra $\mathscr{U}$ of $\Uq$. Let $\cO_{\mathrm{int}}$ denote the category of integrable $\Uq$-modules in category $\cO$ and let ${\cF}or :\cO_{\mathrm{int}}\rightarrow \Vect$ be the forgetful functor into the category of $\field$-vector spaces. We define $\mathscr{U}=\mathrm{End}({\cF}or)$ to be the set of natural transformations from ${\cF}or$ to itself. By construction, $\mathscr{U}$ is an algebra which contains $\Uq$ as a subalgebra. See \cite[Section 3]{a-BalaKolb19} for precise definitions and further details. Consider the elements of the product $\widehat{U^+} =\prod_{\mu\in Q^+}U^+_\mu$ as additive infinite sums of elements in $U^+$. Then $\widehat{U^+}$ is a subalgebra of $\mathscr{U}$, see \cite[Example 3.2]{a-BalaKolb19}.

\subsection{Generalized Satake diagrams}\label{sec:genSatake}
In \cite{a-Kolb14} quantum symmetric pairs were defined for involutive automorphisms of $\gfrak$ of the second kind. Such involutions are classified in terms of Satake diagrams $(X,\tau)$ where $X\subset I$ is a subset of finite type and $\tau:I\rightarrow I$ is an involutive diagram automorphism satisfying the compatibility conditions given in \cite[Definition 2.3]{a-Kolb14}. Associated to the finite type subset $X$ is a finite type root system $\Phi_X\subset \Phi$ with parabolic Weyl group $W_X\subset W$. Let $w_X\in W_X$ be the longest element and let $\rho_X^\vee\in \hfrak$ be the half sum of positive coroots of $\Phi_X$. To be a Satake diagram, the pair $(X,\tau)$ has to satisfy the two conditions
\begin{enumerate}
  \item $\tau(\alpha_j)=-w_X(\alpha_j)$ for all $j\in X$;\label{cond:1}
  \item If $i\in I\setminus X$ and $\tau(i)=i$ then $\alpha_i(\rho_X^\vee)\in \Z$.\label{cond:2}
\end{enumerate}
Condition \eqref{cond:1} implies that the map $\Theta=-w_X\circ \tau:Q\rightarrow Q$ is an involutive automorphism of the root lattice. 
It was observed by V.~Regelskis and B.~Vlaar that condition \eqref{cond:2} can be replaced by the weaker condition
\begin{enumerate}
  \item[(2')] If $\tau(i)=i$ and $a_{ji}=-1$ then $\Theta(\alpha_i)\neq -\alpha_i-\alpha_j$. \label{cond:2p}
\end{enumerate}
Following \cite{a-RV20}, we call pairs $(X,\tau)$ as above, satisfying conditions \eqref{cond:1}, (2') generalized Satake diagrams.
As explained in \cite[Section 4]{a-RV20} the construction of quantum symmetric pairs and the structure theory developed in \cite{a-Kolb14} remains valid for generalized Satake diagrams.

The diagram automorphism $\tau$ lifts to an Hopf-algebra automorphism of $\Uq$. As in \cite[3.1.3]{b-Lusztig94} let $\sigma:\Uq\rightarrow \Uq$ denote the $\field$-algebra antiautomorphism defined by
\begin{align*}
  \sigma(E_i)=E_i, \qquad \sigma(F_i)=F_i, \qquad \sigma(K_i)=K_i^{-1} \qquad\mbox{for all $i\in I$.}
\end{align*}  
The following proposition was conjectured in \cite[Conjecture 2.7]{a-BalaKolb15} and was proved in \cite[Proposition 2.5]{a-BalaKolb15} up to a sign. The sign was confirmed by H.~Bao and W.~Wang in \cite{a-BW18p} by a subtle argument involving canonical bases for $\Uq$. The arguments in \cite{a-BalaKolb15}, \cite{a-BW18p} do not involve condition (2) above, and hence also hold in the setting of generalized Satake diagrams, see also \cite[Lemma 7.14]{a-AV20p}.
\begin{prop}\label{prop:BK-conj}
   Let $(X,\tau)$ be a generalized Satake diagram for the symmetrizable Kac-Moody algebra $\gfrak$. Then the relation
  \begin{align*}
     \sigma\circ \tau(r_i(T_{w_X}(E_i)))=r_i(T_{w_X}(E_i))
  \end{align*}
  holds in $\Uq$ for all $i\in I\setminus X$.
\end{prop}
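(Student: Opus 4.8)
The plan is to move the antiautomorphism $\sigma\circ\tau$ past $T_{w_X}$ and the skew-derivation $r_i$ by means of their intertwining properties, reduce the assertion to a single identity in $U^+$, and then settle that identity up to a sign, the sign being the genuinely delicate point. First I would record four intertwining relations. As $\tau$ is a diagram automorphism with $\tau(X)=X$ it fixes the longest element $w_X$ of $W_X$, so $\tau\circ T_{w_X}=T_{w_X}\circ\tau$; comparing the defining relation \eqref{eq:skew-def+} for $i$ with the one for $\tau(i)$ gives $\tau\circ r_i=r_{\tau(i)}\circ\tau$. Applying the antiautomorphism $\sigma$ to \eqref{eq:skew-def+} and invoking uniqueness yields ${}_ir=\sigma\circ r_i\circ\sigma$, and a standard symmetry of the braid operators (cf.\ \cite[\S37]{b-Lusztig94}) gives $\sigma\circ T_{w_X}=T_{w_X}^{-1}\circ\sigma$ since $w_X^{-1}=w_X$. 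Because $\sigma$ and $\tau$ fix every $E_j$, applying first $\tau$ and then $\sigma$ to $r_i(T_{w_X}(E_i))$ and using these relations turns the claim into the equivalent identity
\[
  r_i\big(T_{w_X}(E_i)\big)={}_{\tau(i)}r\big(T_{w_X}^{-1}(E_{\tau(i)})\big).
\]

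Before attacking this I would verify that the two sides share a weight space: the left-hand side has weight $w_X(\alpha_i)-\alpha_i$ and the right-hand side $w_X(\alpha_{\tau(i)})-\alpha_{\tau(i)}$, both lying in $\sum_{j\in X}\N_0\,\alpha_j$. Condition \eqref{cond:1} forces $\Theta=-w_X\circ\tau$ to act as the identity on $\Z\Pi$ restricted to the nodes in $X$, and combining this with $\Theta^2=\id$ in a short computation yields $w_X(\alpha_i)-\alpha_i=w_X(\alpha_{\tau(i)})-\alpha_{\tau(i)}$. Hence $\sigma\circ\tau$ restricts to an involutive linear map of the finite-dimensional space $U^+_{w_X(\alpha_i)-\alpha_i}$, and it remains to show that $r_i(T_{w_X}(E_i))$ is an eigenvector with eigenvalue $+1$.

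The eigenvalue is the crux. The formal relations above re-express the claim but cannot by themselves separate $+1$ from $-1$, and since $T_{w_X}$ is the full longest element the element $r_i(T_{w_X}(E_i))$ is in general supported on all of $X$, so no reduction to a rank-one or rank-two subdiagram is available. To obtain eigenvector-ness I would pass to the canonical basis of the subalgebra generated by $\{E_j\mid j\in X\}$: the diagram automorphism $\tau$ and the antiautomorphism $\sigma$ each permute this basis, so tracking the canonical-basis expansion of $r_i(T_{w_X}(E_i))$ shows $\sigma\circ\tau\big(r_i(T_{w_X}(E_i))\big)=\pm\, r_i(T_{w_X}(E_i))$. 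Fixing the sign to $+1$ is where integrality and positivity of the canonical basis must enter: the relevant transition coefficients are positive, which is incompatible with the eigenvalue $-1$. This last step is precisely the subtle canonical-basis argument of Bao and Wang \cite{a-BW18p}, and I expect it, rather than any of the formal reductions, to be the real obstacle.
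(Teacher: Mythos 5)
You should first be aware that the paper does not actually prove Proposition \ref{prop:BK-conj}: its entire ``proof'' is the paragraph preceding the statement, which cites \cite[Proposition 2.5]{a-BalaKolb15} for the identity up to a sign, cites \cite{a-BW18p} for the determination of that sign by a canonical basis argument, and adds the single substantive observation that neither of those arguments uses condition (2), so that both remain valid for generalized Satake diagrams. Measured against that, your proposal follows essentially the same route. Your formal reductions are correct: the four intertwining relations $\tau\circ T_{w_X}=T_{w_X}\circ\tau$, $\tau\circ r_i=r_{\tau(i)}\circ\tau$, $\sigma\circ r_i\circ\sigma={}_ir$, and $\sigma\circ T_{w_X}\circ\sigma=T_{w_X}^{-1}$ (using $w_X^{-1}=w_X$) all hold, the reduction of the claim to $r_i(T_{w_X}(E_i))={}_{\tau(i)}r(T_{w_X}^{-1}(E_{\tau(i)}))$ is right, the weight identity $w_X(\alpha_i)-\alpha_i=w_X(\alpha_{\tau(i)})-\alpha_{\tau(i)}$ does follow from $\Theta|_{\Z X}=\id$ and $\Theta^2=\id$ as you say, and you correctly locate the crux in the sign and correctly attribute its resolution to \cite{a-BW18p} --- which is exactly the division of labour the paper itself relies on.

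Two criticisms. First, the intermediate step is a non sequitur as written: the fact that $\sigma$ and $\tau$ permute the canonical basis of the subalgebra generated by $\{E_j\,|\,j\in X\}$ says nothing about a \emph{particular} vector being a $\pm 1$-eigenvector of $\sigma\circ\tau$; for that you would need to know that the canonical-basis expansion of $r_i(T_{w_X}(E_i))$ is, up to a global scalar, stable under the induced permutation, which is essentially the assertion being proved. In the literature the $\pm 1$ statement of \cite[Proposition 2.5]{a-BalaKolb15} is obtained by a different mechanism (a uniqueness argument for the relevant weight vector combined with involutivity of $\sigma\circ\tau$), and canonical bases enter only in \cite{a-BW18p} to fix the sign; note also that once one knows the element is a scalar multiple of a single $\sigma\tau$-fixed canonical basis element, the eigenvalue $+1$ is automatic, so the two-step ``first $\pm$, then positivity rules out $-$'' organization you propose does not reflect how the canonical basis argument can actually run. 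Second, the proposition is stated for \emph{generalized} Satake diagrams, while \cite{a-BalaKolb15} and \cite{a-BW18p} are written for Satake diagrams; a complete argument must verify that the cited proofs never invoke condition (2), only (1) and (2'). Your own reductions visibly use only condition (1), which is fine, but the same check is needed for the sign argument you defer to --- and that check is precisely the one piece of mathematical content the paper adds beyond the citations.
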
  
\section{The quasi $K$-matrix, revisited}\label{sec:quasiK}
\subsection{Quantum symmetric pairs}\label{sec:QSP}
Let $(X,\tau)$ be an generalized Satake diagram. Set $\cM_X=\field\langle E_j, F_j, K_j^{\pm 1}\,|\,j\in X\rangle$ and set $U^0_\Theta=\field\langle K_\beta\,|\,\beta\in Q,\, \Theta(\beta)=\beta\rangle$. Define a set of parameters
\begin{align*}
  \cC=\{\bc=(c_i)_{i\in I\setminus X}\in (\field^\times)^{I\setminus X}\,|\, c_i=c_{\tau(i)} \mbox{ if $(\alpha_i,\Theta(\alpha_i))=0$}\}.
\end{align*}  
For any $\bc=(c_i)_{i\in I\setminus X}\in\cC$ we consider the subalgebra $\Bc$ of $\uqg$ generated by $\cM_X$, $U^0_\Theta$ and the elements
\begin{align}\label{eq:Bi-def}
  B_i=F_i - c_i T_{w_X}(E_\tau(i))K_i^{-1}\qquad \mbox{for all $i\in I\setminus X$.}
\end{align}
To unify notation, we set $c_i=0$ for $i\in X$ and extend \eqref{eq:Bi-def} by writing $B_i=F_i$ if $i\in X$.
Following \cite{a-Letzter99a}, \cite{a-Kolb14} we call $\Bc$ the quantum symmetric pair coideal subalgebra of $\uqg$ corresponding to the generalized Satake diagram $(X,\tau)$
\begin{rema}
  The parameters $c_i$ were denoted by $c_is(\tau(i))$ in \cite{a-BalaKolb19}. The additional parameters $s(\tau(i))$ appeared in the construction of involutive automorphisms of $\gfrak$ of the second kind corresponding to the Satake diagram $(X,\tau)$. For the construction of quantum symmetric pairs, however, it is advantageous to suppress the parameters $s(\tau(i))$ in the notation.
\end{rema}
\begin{rema}
  In \cite{a-Kolb14}, quantum symmetric pairs depend on a second family of parameters $\bs$ in a certain subset $\cS\subset \field^{I\setminus X}$. The corresponding coideal subalgebras are then denoted by $\cB_{\bc,\bs}$. By \cite[Theorem 7.1]{a-Kolb14} the algebra $\Bc$ is isomorphic to $\cB_{\bc,\bs}$ as an algebra for all $\bs$. As we only aim to establish the existence of a bar involution for $\cB_{\bc,\bs}$, it suffices to consider the case where $\bs=(0,0, \dots,0)$. Moreover, it was noted in \cite[3.5]{a-DobsonKolb19}, that for the construction of the quasi $K$-matrix there is no loss of generality in the restriction to the case $\bs=(0,0,\dots,0)$.
\end{rema}  
\subsection{The quasi $K$-matrix $\Xfrak$}
In \cite[Section 7]{a-AV20p} A.~Appel and V.~Vlaar gave a reformulation of the results of \cite[Section 6]{a-BalaKolb19} which does not rely on the existence of a bar involution for $\cB_\bc$. As notations and conventions in \cite{a-AV20p} somewhat differ from those of the present note, we feel it is beneficial to recall this reformulation in the present setting. We begin with a reformulation of \cite[Proposition 6.1]{a-BalaKolb19}. Let $2\rho_X$ be the sum of positive roots in $\Phi_X$.
\begin{prop}\label{prop:intertwiner}
  Let $\Xfrak=\sum_{\mu\in Q^+}\Xfrak_\mu\in \widehat{U^+}$ with $\Xfrak_\mu\in U^+_\mu$. The following are equivalent:
  \begin{enumerate}
    \item For all $i\in I$ the relation
      \begin{align*}
        B_i\Xfrak = \Xfrak\big(F_i- (-1)^{2\alpha_i(\rho_X^\vee)}q^{-(\alpha_i,\Theta(\alpha_i)-2\rho_X)}c_{\tau(i)} \overline{T_{w_X}(E_{\tau(i)})} K_i\big) 
      \end{align*}
      holds in $\mathscr{U}$.
    \item The element $\Xfrak$ satisfies the relations
      \begin{align}
        r_i(\Xfrak_\mu) &= (q_i-q_i^{-1}) (-1)^{2\alpha_i(\rho_X^\vee)} q^{-(\alpha_i,\Theta(\alpha_i)-2\rho_X)} c_{\tau(i)} \Xfrak_{\mu+\Theta(\alpha_i)-\alpha_i} \overline{T_{w_X}(E_{\tau(i)})} \label{eq:rec1},\\
        {}_ir(\Xfrak_\mu) &= (q_i-q_i^{-1}) q^{-(\Theta(\alpha_i),\alpha_i)} c_i T_{w_X}(E_{\tau(i)})  \Xfrak_{\mu+\Theta(\alpha_i)-\alpha_i} \label{eq:rec2}  
      \end{align}
      for all $i\in I$.
  \end{enumerate}
  If the equivalent conditions (1) and (2) hold, then the following also hold:
  \begin{enumerate}
    \item[(3)] For all $x\in U^0_\Theta \cM_X$ one has $x\Xfrak=\Xfrak x$.
    \item[(4)] For all $\mu\in Q^+$ such that $\Xfrak_\mu\neq 0$ one has $\Theta(\mu)=-\mu$.
  \end{enumerate}  
\end{prop}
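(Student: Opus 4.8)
The plan is to extract (2) from (1) via the skew-derivations, and then to deduce (4) and (3) from the recursions in (2). For the equivalence of (1) and (2) I would expand $B_i\Xfrak=(F_i-c_iT_{w_X}(E_{\tau(i)})K_i^{-1})\Xfrak$ and push $F_i$ through $\Xfrak$ by applying the defining relation \eqref{eq:skew-def+} to each homogeneous component, giving $F_i\Xfrak=\Xfrak F_i-\frac{1}{q_i-q_i^{-1}}\sum_\mu\big(r_i(\Xfrak_\mu)K_i-K_i^{-1}\,{}_ir(\Xfrak_\mu)\big)$. After cancelling the common term $\Xfrak F_i$ against the first summand on the right-hand side of (1), the surviving identity lies in $\widehat{U^+}\,U^0$; since elements $uK_i$ and $u'K_i^{-1}$ with $u,u'\in\widehat{U^+}$ are linearly independent in $\mathscr{U}$ by the triangular decomposition, I may split it into its $K_i$-component and its $K_i^{-1}$-component and demand that each vanish. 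Commuting the Cartan factors past homogeneous elements (recording the scalars $q^{\pm(\alpha_i,\,\cdot\,)}$) and using that $T_{w_X}(E_{\tau(i)})\in U^+_{-\Theta(\alpha_i)}$, since $-\Theta(\alpha_i)=w_X(\alpha_{\tau(i)})$, I would match components in each $U^+_\nu$: the $K_i$-part reproduces \eqref{eq:rec1} and the $K_i^{-1}$-part reproduces \eqref{eq:rec2}. Every manipulation is reversible, so this yields the equivalence; the only care needed is the bookkeeping of $q$-powers and of the weight $-\Theta(\alpha_i)$.

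For (4) I would induct on $\height(\mu)$, the case $\mu=0$ being immediate. If $\mu\neq0$ and $\Xfrak_\mu\neq0$, then since an element of $U^+_\mu$ with $\mu\neq0$ annihilated by all $r_i$ must vanish, some $r_i(\Xfrak_\mu)\neq0$; as \eqref{eq:rec1} forces $r_j(\Xfrak_\mu)=0$ for $j\in X$ (there $c_{\tau(j)}=0$), this index lies in $I\setminus X$. For such $i$ the root $\Theta(\alpha_i)=-w_X(\alpha_{\tau(i)})$ is negative, so \eqref{eq:rec1} exhibits a nonzero component $\Xfrak_\nu$ with $\nu=\mu+\Theta(\alpha_i)-\alpha_i$ of strictly smaller height. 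The inductive hypothesis gives $\Theta(\nu)=-\nu$, and expanding this relation using $\Theta^2=\id$ yields $\Theta(\mu)=-\mu$.

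For (3) it suffices to check the algebra generators. For $K_\beta$ with $\Theta(\beta)=\beta$, property (4) and the $\Theta$-invariance of $(\cdot,\cdot)$ give $(\beta,\mu)=(\beta,\Theta(\mu))=-(\beta,\mu)$, hence $(\beta,\mu)=0$ whenever $\Xfrak_\mu\neq0$, so $K_\beta\Xfrak=\Xfrak K_\beta$; this settles $U^0_\Theta$ and, as $\Theta(\alpha_j)=\alpha_j$ for $j\in X$, also the $K_j$. Commutation with $F_j$ for $j\in X$ is immediate from (1), since $c_{\tau(j)}=0$ forces $B_j\Xfrak=\Xfrak F_j$. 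The genuinely delicate point, which I expect to be the main obstacle, is commutation with $E_j$ for $j\in X$: condition (1) only constrains the lowering side and does not see $E_j$ directly, and the naive guess fails without the weight constraint (for instance $E_k$ commutes with $F_j$ but not with $E_j$ when $a_{jk}<0$, although then it also fails to commute with $K_j$). Here I would argue inside $\mathscr{U}=\End({\cF}or)$: restricting an arbitrary $M\in\cO_{\mathrm{int}}$ to the $\mathfrak{sl}_2$-copy attached to $j$ makes it a locally finite sum of finite-dimensional type-$1$ modules. Because $\Xfrak$ commutes with $F_j$ and $K_j$, the divided powers $F_j^{(k)}$ commute with $\Xfrak$ and respect the $\mathfrak{sl}_2$-string lengths; applying $F_j^{(m+1)}$ to a highest weight vector of weight $m$ then shows that $\Xfrak$ maps highest weight vectors to highest weight vectors, so $\Xfrak$ is an endomorphism of $M$ over this $\mathfrak{sl}_2$ and hence commutes with $E_j$ on every $M$, thus in $\mathscr{U}$.
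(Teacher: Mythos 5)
Your proof is correct, and for the equivalence of (1) and (2) and for property (4) it follows exactly the route the paper indicates: the equivalence is bookkeeping with \eqref{eq:skew-def+}, splitting the resulting identity into its $K_i$- and $K_i^{-1}$-parts (legitimate, weight by weight, by triangular decomposition), and (4) is the height induction using that a nonzero element of $U^+_\mu$, $\mu\neq 0$, cannot be killed by all $r_i$, together with the fact that $\Theta(\alpha_i)$ is a negative root for $i\in I\setminus X$. The genuine difference is in property (3), which the paper does not argue at all but outsources to the proof of \cite[Proposition 6.1]{a-BalaKolb19}. You correctly isolate the one generator for which something must be done, namely $E_j$ with $j\in X$ (the cases $K_\beta$ and $F_j$ being the isometry/weight argument and the vanishing of $r_j$, ${}_jr$ via \eqref{eq:skew-def+}), and your resolution is a valid self-contained substitute for the citation: working in $\mathscr{U}=\End({\cF}or)$, restricting each $M\in\cO_{\mathrm{int}}$ to the $j$-th quantum $\mathfrak{sl}_2$ (a direct sum of finite-dimensional type $1$ modules, by integrability and complete reducibility), and observing that an operator commuting with $F_j$ and $K_j^{\pm 1}$ there must commute with $E_j$. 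Your route to this last fact -- weight-$m$ vectors killed by $F_j^{(m+1)}$ are exactly the highest weight vectors, so $\Xfrak$ preserves each space $H_m$ of highest weight vectors, and then $F_j$-equivariance forces $\Xfrak$ to act as $\mathrm{id}\otimes(\Xfrak|_{H_m})$ on each isotypic component -- is sound, though the final step ("is an endomorphism over this $\mathfrak{sl}_2$") deserves the one extra line just given. What your version buys is independence from the internal machinery of \cite{a-BalaKolb19}: the cited argument rests on the same $\mathfrak{sl}_2$-theoretic fact (a weight-zero vector killed by the lowering operator of a locally finite $\mathfrak{sl}_2$-action spans a trivial submodule), deployed through the adjoint action of the $j$-th $\mathfrak{sl}_2$ rather than through the category $\cO_{\mathrm{int}}$; note that by (4) one has $(\alpha_j,\mu)=0$, which is what makes the adjoint-action operators reduce to honest commutators on $\Xfrak_\mu$, so the two formulations translate into one another. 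Either way, your logical skeleton -- (1)$\Leftrightarrow$(2), then (4) by induction, then (4) together with (1) gives (3) -- is precisely the paper's.
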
  
\begin{proof}
  The equivalence of (1) and (2) follows from the property \eqref{eq:skew-def+} of the skew derivations $r_i$ and ${}_ir$. Property (4) follows by induction over the height of $\mu$ and in turn implies property (3) just as in the proof of \cite[Proposition 6.1]{a-BalaKolb19}. 
\end{proof}
Next we need \cite[Proposition 6.3]{a-BalaKolb19}. This proposition is a general statement about solving recursive relations of the type \eqref{eq:rec1}, \eqref{eq:rec2} simultaneously, and hence holds in our setting. Let $\langle\, , \, \rangle: U^-\ot U^+\rightarrow \field$ be the nondegenerate pairing considered in \cite[Section 2.3]{a-BalaKolb19}.
\begin{prop}\label{prop:riir-conditions}{\upshape{(\cite[Proposition 6.3]{a-BalaKolb19})}}
  Let $\mu \in Q^+$ with $\height(\mu)\ge 2$ and fix elements $A_i,\, {}_iA\in U^+_{\mu-\alpha_i}$ for all $i\in I$. Then the following are equivalent:
  \begin{enumerate}
  \item There exists an element $\underline{X}\in U^+_\mu$ such that
    \begin{align}\label{eq:solveX}
       r_i(\underline{X})=A_i \quad \mbox{and} \quad  {}_ir(\underline{X})={}_iA \qquad \mbox{for all $i\in I$.}
    \end{align}  
  \item The elements  $A_i,\, {}_iA$ have the following two properties:\\
    (a) For all $i,j\in I$ one has
    \begin{align*}
       r_i({}_jA) = {}_jr(A_i).
    \end{align*}
    (b) For all $i\neq j\in I$ one has
    \begin{align}
  \label{eq:AiAj-rel2}    \frac{-1}{q_i-q_i^{-1}}\sum_{s=1}^{1-a_{ij}}&
      \begin{bmatrix} 1-a_{ij}\\s\end{bmatrix}_{q_i}(-1)^s \langle F_i^{1-a_{ij}-s} F_j F_i^{s-1}, A_j \rangle\\& - \frac{1}{q_j-q_j^{-1}}\langle F_i^{1-a_{ij}},A_j \rangle =0.\nonumber
    \end{align}  
  \end{enumerate}
  Moreover, if the system of equation \eqref{eq:solveX} has a solution $\underline{X}$ then this solution is uniquely determined.  
\end{prop}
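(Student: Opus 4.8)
The plan is to treat uniqueness and the two implications of the biconditional separately, and to build the solution in the hard direction from the nondegenerate pairing $\langle\,,\,\rangle\colon U^-\ot U^+\to\field$.

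For uniqueness I would use the separation property of the right skew-derivations: if $x\in U^+_\nu$ with $\nu\neq 0$ satisfies $r_i(x)=0$ for all $i\in I$, then $x=0$. This is an induction on $\height(\nu)$ using nondegeneracy of the pairing together with the adjunction under which $\langle y'F_i,x\rangle$ is proportional to $\langle y',r_i(x)\rangle$: every $y\in U^-_{-\nu}$ with $\nu\neq 0$ is a linear combination of elements $y'F_i$, so all pairings $\langle y,x\rangle$ vanish. Given two solutions $\underline{X},\underline{X}'$ of \eqref{eq:solveX}, their difference is annihilated by every $r_i$ and hence is zero.

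For $(1)\Rightarrow(2)$, condition (a) is immediate once one knows that $r_i$ and ${}_jr$ commute on $U^+$, which I would check on the generators $E_k$ using the twisted Leibniz rules \eqref{eq:skew-derivations}; then $r_i({}_jA)=r_i({}_jr(\underline{X}))={}_jr(r_i(\underline{X}))={}_jr(A_i)$. Condition (b) I would read as compatibility of the data with the quantum Serre relations: since $\underline{X}$ lies in $U^+_\mu$, which is the quotient of the free algebra on the $E_i$ by the radical of the form, the pairing of $\underline{X}$ against any element of the Serre ideal vanishes; pairing $A_j=r_j(\underline{X})$ against the displayed $F$-monomials and unwinding the adjunction between right multiplication by $F_j$ and $r_j$ turns the two terms of (b) into exactly such pairings. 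Because the form is weight-homogeneous, for each $i\neq j$ the two terms are supported in different weights, so (b) is a finite, weight-localized check.

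The substantial direction is $(2)\Rightarrow(1)$, and here the hard part is the construction itself. I would define a linear functional $\Lambda$ on $U^-_{-\mu}$ by peeling one generator off an $F$-word and pairing the remainder with the appropriate datum — right-peeling against $A_i$ via $r_i$, left-peeling against ${}_iA$ via ${}_ir$ — and then invoke nondegeneracy of $\langle\,,\,\rangle$ to produce the unique $\underline{X}\in U^+_\mu$ representing $\Lambda$. The main obstacle will be well-definedness of $\Lambda$: the prescription a priori lives on the free algebra on the $F_i$, and one must show it descends to $U^-_{-\mu}$. Consistency between the left- and right-peeling recursions is precisely what condition (a) supplies, while the requirement that $\Lambda$ annihilate the quantum Serre relations is precisely condition (b); propagating the weight-localized identity (b) to Serre relations embedded in arbitrary words, with the correct $q$-binomial bookkeeping, is the technical crux. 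Once $\underline{X}$ is produced, verifying $r_i(\underline{X})=A_i$ and ${}_ir(\underline{X})={}_iA$ is a direct unwinding of the adjunction formulas.
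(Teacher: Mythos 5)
The first thing to say is that the paper itself contains no proof of this proposition: it is imported verbatim from \cite[Proposition 6.3]{a-BalaKolb19} with the remark that it is a general statement about the recursions \eqref{eq:rec1}, \eqref{eq:rec2} and so holds unchanged. The benchmark is therefore the proof in \cite{a-BalaKolb19}, and your outline follows the same route as that proof: uniqueness from the separation property of the $r_i$, (1)$\Rightarrow$(2)(a) from commutativity of $r_i$ and ${}_jr$, (1)$\Rightarrow$(2)(b) by pairing the quantum Serre element against $\underline{X}$, and (2)$\Rightarrow$(1) by building a linear functional on $U^-_{-\mu}$ by peeling and invoking nondegeneracy of $\langle\,,\,\rangle$. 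However, your sketch has one genuine confusion and one genuine gap.

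The confusion concerns condition (b). As printed in this note, both pairings in \eqref{eq:AiAj-rel2} are against $A_j$; since the pairing respects weights, the first batch is then nonzero only if $\mu=-a_{ij}\alpha_i+2\alpha_j$ while the second term is nonzero only if $\mu=(1-a_{ij})\alpha_i+\alpha_j$, so the printed condition decouples into two separate vanishing statements, and these do \emph{not} follow from (1): for $\gfrak=\slfrak_3$, $\mu=2\alpha_1+\alpha_2$, $\underline{X}=E_1^2E_2$ one gets $A_2=r_2(\underline{X})=E_1^2$ and $\langle F_1^2,E_1^2\rangle\neq 0$ while the first batch vanishes by weights, so the printed \eqref{eq:AiAj-rel2} fails. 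The first pairing must be against $A_i$, as in \cite[Proposition 6.3]{a-BalaKolb19}; this is also the only reading consistent with the paper's later verification that the elements \eqref{eq:Ai} satisfy \eqref{eq:AiAj-rel2}, whose proof asserts that both terms vanish unless $\mu=(1-a_{ij})\alpha_i+\alpha_j$. Your Serre-pairing mechanism proves exactly this corrected version: unwinding the $r_i$-adjunction on the $s\geq1$ terms and the $r_j$-adjunction on the $s=0$ term reassembles the left-hand side into a multiple of $\langle S_{ij},\underline{X}\rangle=0$, where $S_{ij}$ is the quantum Serre element. But your accommodation of the printed version --- ``the two terms are supported in different weights, so (b) is a weight-localized check'' --- is not a proof of either version: with $A_j$ in both slots the unwound monomials $F_i^{1-a_{ij}-s}F_jF_i^{s-1}F_j$ do not assemble into a Serre-ideal element, and the resulting statement is false.

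The genuine gap is in (2)$\Rightarrow$(1), where you leave precisely the substance of the proposition --- that $\Lambda$ descends from the free algebra $\cF=\field\langle f_i\,|\,i\in I\rangle$ to $U^-_{-\mu}$, i.e.\ that it kills every element $uS_{ij}v$ of the Serre ideal --- as ``the technical crux'', and the route you gesture at (propagating (b) into embedded Serre relations by $q$-binomial bookkeeping) is not the one that works. The missing idea is short. Define $\Lambda$ by right-peeling only: $\Lambda(wf_l)=\tfrac{-1}{q_l-q_l^{-1}}\langle\pi(w),A_l\rangle$, where $\pi:\cF\rightarrow U^-$ is the projection. If $v\neq 1$, all monomials of $uS_{ij}v$ end in the same letter $f_m$ and the common prefix satisfies $\pi(uS_{ij}v')=0$, so $\Lambda(uS_{ij}v)=0$. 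If $v=1$ and $u=f_ku'$, one application of (a) shows that on words beginning with $f_k$ the functional $\Lambda$ \emph{is} honest pairing with the element ${}_kA$: up to the adjunction constants, $\Lambda(f_kw'f_l)\propto\langle F_k\pi(w'),A_l\rangle\propto\langle\pi(w'),{}_kr(A_l)\rangle=\langle\pi(w'),r_l({}_kA)\rangle\propto\langle\pi(w'f_l),{}_kA\rangle$, whence $\Lambda(f_ku'S_{ij})\propto\langle\pi(u'S_{ij}),{}_kA\rangle=0$. The bare case $u=v=1$ is condition (b). This representability observation --- condition (a) converts the peeling recursion into pairing against an actual element one step down --- is the heart of the hard direction and is absent from your sketch; it is also what you need (rather than ``direct unwinding'') to verify ${}_ir(\underline{X})={}_iA$ once $\underline{X}$ has been produced, since $\Lambda$ was defined by peeling on the other side.
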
  
We now translate \cite[Section 6.4]{a-BalaKolb19} into our setting. This is the crucial step where we also need Proposition \ref{prop:BK-conj}. We hence give all the details. Fix $\mu\in Q^+$ and assume that a collection $(\Xfrak_{\mu'})_{\mu'<\mu}$ with $\Xfrak_{\mu'}\in U^+_{\mu'}$ and $\Xfrak_0=1$ has already been constructed, and that this collection satisfies the relations \eqref{eq:rec1},\eqref{eq:rec2} for all $\mu'<\mu$ and all $i\in I$. Define
\begin{align}
  A_i &= (q_i-q_i^{-1})(-1)^{2\alpha_i(\rho_X^\vee)} q^{-(\alpha_i,\Theta(\alpha_i)-2\rho_X)}c_{\tau(i)} \Xfrak_{\mu+\Theta(\alpha_i)-\alpha_i}  \overline{T_{w_X}(E_{\tau(i)})} \label{eq:Ai}\\
  {}_i A&=(q_i-q_i^{-1}) q^{-(\Theta(\alpha_i),\alpha_i)} c_i T_{w_X}(E_{\tau(i)}) \Xfrak_{\mu+\Theta(\alpha_i)-\alpha_i}. \label{eq:iA}
\end{align}
We have the following analog of \cite[Lemma 6.7]{a-BalaKolb19}.
\begin{lem}
 The relation $r_i({}_jA)={}_jr(A_i)$ holds for all $i,j\in I$.
\end{lem}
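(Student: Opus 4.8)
The plan is to verify the identity $r_i({}_jA) = {}_jr(A_i)$ by applying the two skew-derivations to the explicit formulas \eqref{eq:Ai} and \eqref{eq:iA} and matching the results term by term. First I would write out both sides using the skew-derivation properties \eqref{eq:skew-derivations}. Applying $r_i$ to ${}_jA = (q_j-q_j^{-1}) q^{-(\Theta(\alpha_j),\alpha_j)} c_j\, T_{w_X}(E_{\tau(j)})\, \Xfrak_{\mu+\Theta(\alpha_j)-\alpha_j}$ splits into a term where $r_i$ hits $T_{w_X}(E_{\tau(j)})$ and a term where it hits $\Xfrak_{\mu+\Theta(\alpha_j)-\alpha_j}$, each carrying an appropriate power of $q$ coming from the weight of the untouched factor. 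Symmetrically, applying ${}_jr$ to $A_i$ produces two analogous terms. The bookkeeping of the $q$-powers requires knowing the weights: $T_{w_X}(E_{\tau(j)})$ has weight $-w_X(\alpha_{\tau(j)}) = \Theta(\alpha_j)$ by condition \eqref{cond:1} applied through the definition of $\Theta$, and $\Xfrak_{\nu}$ lives in $U^+_\nu$.

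The key structural input will be the recursive relations \eqref{eq:rec1} and \eqref{eq:rec2} satisfied by the already-constructed lower components $\Xfrak_{\mu'}$ for $\mu' < \mu$. When $r_i$ acts on $\Xfrak_{\mu+\Theta(\alpha_j)-\alpha_j}$, its height is strictly below $\mu$, so I may substitute formula \eqref{eq:rec1} for $r_i(\Xfrak_{\mu+\Theta(\alpha_j)-\alpha_j})$; likewise ${}_ir(\Xfrak_{\mu+\Theta(\alpha_i)-\alpha_i})$ gets replaced using \eqref{eq:rec2}. After these substitutions both sides should reduce to expressions involving products $T_{w_X}(E_{\tau(j)})\, \Xfrak_{\nu}\, \overline{T_{w_X}(E_{\tau(i)})}$ for a common lower weight $\nu$, multiplied by scalar coefficients. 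The task then becomes a purely scalar verification that the two coefficients agree, which amounts to tracking the exponents of $q$, the signs $(-1)^{2\alpha_i(\rho_X^\vee)}$, and the parameters $c_i, c_j$.

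The remaining terms, where $r_i$ and ${}_jr$ respectively differentiate the $T_{w_X}(E)$ factors rather than $\Xfrak$, must also be matched. This is precisely where Proposition \ref{prop:BK-conj} enters: the derivative $r_i(T_{w_X}(E_{\tau(i)}))$, and its image under $\sigma\circ\tau$, are constrained by the $\sigma\tau$-invariance asserted there. I expect the term in which $r_i$ hits $\overline{T_{w_X}(E_{\tau(i)})}$ inside $A_i$ to pair off against the term in which ${}_jr$ hits $T_{w_X}(E_{\tau(j)})$ inside $A_i$ only after invoking the symmetry $\sigma\tau(r_i(T_{w_X}(E_i))) = r_i(T_{w_X}(E_i))$, together with the compatibility of the bar involution $\barU$ with the skew-derivations. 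These diagonal $i=j$ contributions are the delicate ones.

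The main obstacle will be the case $i = j$, where both differentiations can act on the same $T_{w_X}(E_{\tau(i)})$ factor and no cancellation is automatic; here the full force of Proposition \ref{prop:BK-conj} is needed to control $r_i(T_{w_X}(E_{\tau(i)}))$ and reconcile the $\sigma\tau$-twisted derivative with the bar-involuted one. The off-diagonal case $i \neq j$ should be comparatively mechanical, resolved entirely by substituting the recursions \eqref{eq:rec1}, \eqref{eq:rec2} and checking the scalar coefficients coincide; the normalizing powers of $q$ and the sign factors in the definitions \eqref{eq:Ai}, \eqref{eq:iA} have been chosen exactly so that this matching works. I would therefore organize the proof by separating these two cases, dispatching $i \neq j$ by direct computation and reserving the careful argument for the diagonal term.
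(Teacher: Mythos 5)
Your overall strategy does coincide with the paper's: expand $r_i({}_jA)$ and ${}_jr(A_i)$ via \eqref{eq:skew-derivations}, substitute the recursions \eqref{eq:rec1}, \eqref{eq:rec2} for the derivatives of the lower components $\Xfrak_{\mu'}$, observe that these leading terms agree, and reduce the claim to matching the residual terms in which the skew-derivations hit the $T_{w_X}(E)$ factors, with Proposition \ref{prop:BK-conj} as the decisive input. (A small bookkeeping correction: $r_i$ acts on ${}_jA$, whose factors are $T_{w_X}(E_{\tau(j)})$ and $\Xfrak_{\mu+\Theta(\alpha_j)-\alpha_j}$, while ${}_jr$ acts on $A_i$, which is the element containing $\overline{T_{w_X}(E_{\tau(i)})}$; your sentence pairing two terms ``inside $A_i$'' conflates the two sides. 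The correct pairing is: the term where $r_i$ hits $T_{w_X}(E_{\tau(j)})$ in ${}_jA$ against the term where ${}_jr$ hits $\overline{T_{w_X}(E_{\tau(i)})}$ in $A_i$.)

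The genuine gap is your case analysis. You designate $i=j$ as the delicate case and declare $i\neq j$ ``comparatively mechanical, resolved entirely by substituting the recursions''; this is wrong whenever the diagram involution $\tau$ is nontrivial. After substituting the recursions, the identity reduces to \eqref{eq:goal}, and the key structural fact -- which appears nowhere in your proposal -- is that \emph{both residual terms vanish unless $\tau(i)=j$ with $i,j\in I\setminus X$}. This follows from the structure of $T_{w_X}(E_{\tau(j)})$ together with \cite[1.2.14]{b-Lusztig94}, which converts ${}_jr(\overline{T_{w_X}(E_{\tau(i)})})$ into a multiple of $\overline{r_j(T_{w_X}(E_{\tau(i)}))}$. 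Thus the delicate case is $j=\tau(i)$, not $j=i$: if $\tau(i)=j\neq i$, your ``mechanical'' off-diagonal treatment would silently drop nonvanishing residual terms and the claimed scalar matching would fail, whereas if $i=j$ with $\tau(i)\neq i$ the residual terms vanish and there is nothing delicate to prove. Moreover, in the genuine case $j=\tau(i)$ the identity to be verified is
\begin{align*}
    \overline{r_i(T_{w_X}(E_i))}= (-1)^{2\alpha_i(\rho_X^\vee)}\, q^{(\alpha_i,\alpha_i-w_X(\alpha_i)-2\rho_X)}\, r_{\tau(i)}(T_{w_X}(E_{\tau(i)})),
\end{align*}
which relates the indices $i$ and $\tau(i)$; establishing it needs not only Proposition \ref{prop:BK-conj} but also Proposition \ref{prop:intertwiner}.(4) (to get $(\alpha_i-\alpha_{\tau(i)},\mu)=0$ so that the $\mu$-dependent powers of $q$ cancel) and \cite[Lemma 2.9]{a-BalaKolb15}, which relates the bar of $r_i(T_{w_X}(E_i))$ to a $\sigma\circ\tau$-twisted expression to which Proposition \ref{prop:BK-conj} can be applied. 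These inputs, and the vanishing statement that trivializes all other pairs $(i,j)$, are what your outline is missing.
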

\begin{proof}
  Using the skew-derivation properties \eqref{eq:skew-derivations} and the assumptions on ${}_iA, A_i$ we calculate
  \begin{align*}
    r_i({}_jA) &= (q_j-q_j^{-1}) q^{-(\Theta(\alpha_j),\alpha_j)} c_j T_{w_X}(E_{\tau(j)})r_i(\Xfrak_{\mu+\Theta(\alpha_j)-\alpha_j}) \\
    &\qquad + (q_j-q_j^{-1}) q^{-(\Theta(\alpha_j),\alpha_j)} c_j q^{(\alpha_i,\mu+\Theta(\alpha_j)-\alpha_j)} r_i(T_{w_X}(E_{\tau(j)}))\Xfrak_{\mu+\Theta(\alpha_j)-\alpha_j}\\
    &=(q_j-q_j^{-1})(q_i-q_i^{-1}) (-1)^{2\alpha_i(\rho_X^\vee)} q^{-(\Theta(\alpha_j),\alpha_j)}   q^{-(\alpha_i,\Theta(\alpha_i)-2\rho_X)}c_j c_{\tau(i)}  \cdot\\
    &\phantom{=(q_j-q_j^{-1})}  \cdot T_{w_X}(E_{\tau(j)})\Xfrak_{\mu+\Theta(\alpha_j)-\alpha_j+\Theta(\alpha_i)-\alpha_i}\overline{T_{w_X}(E_{\tau(i)})} \\
    &\qquad + (q_j-q_j^{-1}) q^{-(\Theta(\alpha_j),\alpha_j)} c_j q^{(\alpha_i,\mu+\Theta(\alpha_j)-\alpha_j)} r_i(T_{w_X}(E_{\tau(j)}))\Xfrak_{\mu+\Theta(\alpha_j)-\alpha_j}\\
  \end{align*}
  and similarly
  \begin{align*}
    {}_jr(A_i) &=(q_i-q_i^{-1}) (-1)^{2\alpha_i(\rho_X^\vee)} q^{-(\alpha_i,\Theta(\alpha_i)-2\rho_X)} c_{\tau(i)}\, {}_jr(\Xfrak_{\mu+\Theta(\alpha_i)-\alpha_i})\overline{T_{w_X}(E_{\tau(i)})}\\
    &\qquad+(q_i-q_i^{-1})(-1)^{2\alpha_i(\rho_X^\vee)} q^{(\alpha_j,\mu+\Theta(\alpha_i)-\alpha_i)-(\alpha_i,\Theta(\alpha_i)-2\rho_X)}\times\\
    & \qquad \qquad \qquad\qquad \qquad \qquad \qquad\qquad \quad  \times c_{\tau(i)} \Xfrak_{\mu+\Theta(\alpha_i)-\alpha_i}\, {}_jr(\overline{T_{w_X}(E_{\tau(i)})})\\
    &=(q_i-q_i^{-1})(q_j-q_j^{-1}) (-1)^{2\alpha_i(\rho_X^\vee)} q^{-(\alpha_i,\Theta(\alpha_i)-2\rho_X)}q^{-(\Theta(\alpha_j),\alpha_j)} c_{\tau(i)} c_j \times \\
    &\phantom{=(q_j-q_j^{-1})}\qquad \qquad \qquad \times T_{w_X}(E_{\tau(j)}) \Xfrak_{\mu+\Theta(\alpha_i)-\alpha_i+\Theta(\alpha_j)-\alpha_j}\overline{T_{w_X}(E_{\tau(i)})}\\
    &\qquad +(q_i-q_i^{-1})(-1)^{2\alpha_i(\rho_X^\vee)}  q^{(\alpha_j,\mu+\Theta(\alpha_i)-\alpha_i)-(\alpha_i,\Theta(\alpha_i)-2\rho_X)}\times\\
    & \qquad\qquad \qquad\qquad \qquad \qquad \qquad\qquad \quad  \times c_{\tau(i)} \Xfrak_{\mu+\Theta(\alpha_i)-\alpha_i}\, {}_jr(\overline{T_{w_X}(E_{\tau(i)})}).
  \end{align*}
  Comparing the above two expressions we see that the relation $r_i({}_jA)={}_jr(A_i)$ is equivalent to the relation
  \begin{align}\label{eq:goal}
   (q_j-q_j^{-1})& q^{-(\Theta(\alpha_j),\alpha_j)} c_j q^{(\alpha_i,\mu+\Theta(\alpha_j)-\alpha_j)} r_i(T_{w_X}(E_{\tau(j)}))\Xfrak_{\mu+\Theta(\alpha_j)-\alpha_j}\\
    &= (q_i-q_i^{-1})  (-1)^{2\alpha_i(\rho_X^\vee)} q^{(\alpha_j,\mu+\Theta(\alpha_i)-\alpha_i)-(\alpha_i,\Theta(\alpha_i)-2\rho_X)}\times\nonumber\\
    & \qquad\qquad \qquad\qquad \qquad \qquad \times c_{\tau(i)} \Xfrak_{\mu+\Theta(\alpha_i)-\alpha_i}\, {}_jr(\overline{T_{w_X}(E_{\tau(i)})}). \nonumber
  \end{align}
  By \cite[1.2.14]{b-Lusztig94} we have ${}_jr(\overline{T_{w_X}(E_{\tau(i)})}){=} q^{(\alpha_j,w_X(\alpha_{\tau(i)})-\alpha_j)}\overline{r_j(T_{w_X}(E_{\tau(i)}))}$. Hence both sides of \eqref{eq:goal} vanish unless $\tau(i)=j\in I\setminus X$, and in this case \eqref{eq:goal} can be rewritten as
  \begin{align*}
    &q^{(\alpha_j,\Theta(\alpha_i)-\Theta(\alpha_j)-\alpha_i)+(\alpha_i,\mu)} r_i(T_{w_X}(E_i)) \Xfrak_{\mu+\Theta(\alpha_j)-\alpha_j}\\
    &= (-1)^{2\alpha_i(\rho_X^\vee)}
    q^{(\alpha_j,\mu - \alpha_i-\alpha_j) - (\alpha_i,\Theta(\alpha_i)-2\rho_X)} \Xfrak_{\mu+\Theta(\alpha_j)-\alpha_j} \overline{r_j(T_{w_X}(E_j))}
  \end{align*}
  From now on we assume $\tau(i)=j$ and hence $\Theta(\alpha_i)-\alpha_i=\Theta(\alpha_j)-\alpha_j$. Hence we may assume that $\Xfrak_{\mu+{\theta(\alpha_j)-\alpha_j}}\neq 0$ and in this case the above equation is equivalent to the equation
  \begin{align*}
     q^{(\alpha_i,\mu)} r_i(T_{w_X}(E_i)) = (-1)^{2\alpha_i(\rho_X^\vee)} q^{(\alpha_{\tau(i)},\mu-\alpha_i)-(\alpha_i,\Theta(\alpha_i)-2\rho_X)}\overline{r_{\tau(i)}(T_{w_X}(E_{\tau(i)}))}.
  \end{align*}
  By Proposition \ref{prop:intertwiner}.(4) we have $(\alpha_i-\alpha_{\tau(i)},\mu)=0$ and hence the above equation can be rewritten as
  \begin{align*}
    \overline{r_i(T_{w_X}(E_i))}= (-1)^{2\alpha_i(\rho_X^\vee)} q^{(\alpha_i,\alpha_i-w_X(\alpha_i)-2\rho_X)}r_{\tau(i)}(T_{w_X}(E_{\tau(i)})). 
  \end{align*}
  By \cite[Lemma 2.9]{a-BalaKolb15}, which also holds for generalized Satake diagrams, and by Proposition \ref{prop:BK-conj} the above equation does indeed hold. 
\end{proof}
Next we obtain an analog of \cite[Lemma 6.8]{a-BalaKolb19}. We include the proof which is simplified along the lines of \cite[Remark 6.9]{a-BalaKolb19}.
\begin{lem}
  For all $i\neq j\in I$ the elements $A_i, A_j$ given by \eqref{eq:Ai} satisfy the relation \eqref{eq:AiAj-rel2}.
\end{lem}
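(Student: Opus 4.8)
The plan is to verify condition \eqref{eq:AiAj-rel2} for the elements $A_i, A_j$ defined by \eqref{eq:Ai} by reducing it, via the explicit form of these elements, to a statement that does not involve the unknowns $\Xfrak_{\mu'}$ in an essential way. First I would substitute the formula \eqref{eq:Ai} for $A_j$ into both terms of \eqref{eq:AiAj-rel2}. Since $A_j = (q_j-q_j^{-1})(-1)^{2\alpha_j(\rho_X^\vee)} q^{-(\alpha_j,\Theta(\alpha_j)-2\rho_X)} c_{\tau(j)} \Xfrak_{\mu+\Theta(\alpha_j)-\alpha_j}\, \overline{T_{w_X}(E_{\tau(j)})}$, each pairing $\langle F_i^{\,a}F_jF_i^{\,b}, A_j\rangle$ and $\langle F_i^{1-a_{ij}}, A_j\rangle$ becomes a scalar multiple of a pairing of a fixed element of $U^-$ against $\Xfrak_{\mu+\Theta(\alpha_j)-\alpha_j}\,\overline{T_{w_X}(E_{\tau(j)})}$. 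The common prefactor $(q_j-q_j^{-1})(-1)^{2\alpha_j(\rho_X^\vee)} q^{-(\alpha_j,\Theta(\alpha_j)-2\rho_X)} c_{\tau(j)}$ can be pulled out of the whole left-hand side of \eqref{eq:AiAj-rel2}, so it suffices to prove the bracketed combination of pairings vanishes.

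Next I would exploit the weight considerations already available: by Proposition \ref{prop:intertwiner}.(4), whenever $\Xfrak_{\mu+\Theta(\alpha_j)-\alpha_j}\neq 0$ we have $\Theta(\mu+\Theta(\alpha_j)-\alpha_j)=-(\mu+\Theta(\alpha_j)-\alpha_j)$, which controls the weight of the product $\Xfrak_{\mu+\Theta(\alpha_j)-\alpha_j}\,\overline{T_{w_X}(E_{\tau(j)})}$ and tells us against which graded piece of $U^-$ the pairing is nonzero. The key structural input is that the whole quantum-Serre-type combination appearing in \eqref{eq:AiAj-rel2} reflects the quantum Serre relation among the $F_i$'s, paired against a \emph{single} element of $U^+$. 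I would therefore recognise the bracketed expression as $\langle Z, \Xi\rangle$ where $Z\in U^-$ is the quantum Serre element (up to the normalising scalars built from the $q_i$-binomials) and $\Xi$ is the relevant $U^+$-element, and then argue that $Z$ pairs to zero against all of $U^+$ because the Serre relation holds in $U^-$ and the pairing $\langle\,,\,\rangle$ descends to the quotient by the defining relations — this is precisely the simplification of \cite[Remark 6.9]{a-BalaKolb19} that lets us bypass an explicit computation with the $A_i$ themselves.

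The main obstacle I expect is bookkeeping the scalar factors: the coefficients $(-1)^{2\alpha_i(\rho_X^\vee)}$, the powers of $q$ of the form $q^{-(\alpha_\bullet,\Theta(\alpha_\bullet)-2\rho_X)}$, and the $q_i$-binomial coefficients $\begin{bmatrix}1-a_{ij}\\s\end{bmatrix}_{q_i}$ must be tracked so that the combination in \eqref{eq:AiAj-rel2} genuinely assembles into the normalised quantum Serre element and not merely a superficially similar expression. In particular one must check that the two terms in \eqref{eq:AiAj-rel2}, with their differing prefactors $\tfrac{-1}{q_i-q_i^{-1}}$ and $\tfrac{1}{q_j-q_j^{-1}}$, combine with the correct relative sign so that the total is $\langle Z,\Xi\rangle$ for the \emph{Serre} element $Z$; any discrepancy would signal a mismatch with the normalisations fixed in Proposition \ref{prop:riir-conditions}. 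Once the identification with the Serre pairing is made, the vanishing is immediate from the defining relations of $U^-$ and the nondegeneracy/compatibility of $\langle\,,\,\rangle$, so the real work is purely in the exponent and sign arithmetic, which I would streamline exactly as in \cite[Remark 6.9]{a-BalaKolb19}.
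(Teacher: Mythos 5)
Your proposal has a genuine gap, and it sits at the central step: you propose to assemble the left-hand side of \eqref{eq:AiAj-rel2} into a single pairing $\langle Z,\Xi\rangle$, with $Z$ the quantum Serre element and $\Xi$ one fixed element of $U^+$, and to conclude vanishing because the Serre relation holds in $U^-$. This is circular, and if it worked it would make Proposition \ref{prop:riir-conditions} vacuous. The identity $\langle Z,\underline{X}\rangle=0$ unpacks into the left-hand side of \eqref{eq:AiAj-rel2} (by peeling off the last letter of each Serre monomial: an $F_i$ for the terms with $s\ge 1$, an $F_j$ for the remaining term, which is exactly the source of the two different prefactors $\tfrac{-1}{q_i-q_i^{-1}}$ and $\tfrac{-1}{q_j-q_j^{-1}}$) \emph{only if} there already exists a single element $\underline{X}\in U^+_\mu$ with $r_i(\underline{X})=A_i$ and $r_j(\underline{X})=A_j$. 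At this stage of the induction, $\Xfrak_\mu$ is precisely what does not yet exist; condition (b) of Proposition \ref{prop:riir-conditions} is the nontrivial compatibility constraint on the pair $(A_i,A_j)$ that guarantees its existence. If \eqref{eq:AiAj-rel2} held automatically for arbitrary $A_i,A_j$, as your argument would show, then every system \eqref{eq:solveX} would be solvable, which is false. A symptom of the problem is that no common $\Xi$ and no ``common prefactor'' are available: the monomials $F_i^{1-a_{ij}-s}F_jF_i^{s-1}$ and $F_i^{1-a_{ij}}$ lie in different weight spaces, namely $-a_{ij}\alpha_i+\alpha_j$ and $(1-a_{ij})\alpha_i$, so the relation genuinely couples the two distinct elements $A_i\in U^+_{\mu-\alpha_i}$ and $A_j\in U^+_{\mu-\alpha_j}$, which are built from different data $\Xfrak_{\mu+\Theta(\alpha_i)-\alpha_i}\,\overline{T_{w_X}(E_{\tau(i)})}$ and $\Xfrak_{\mu+\Theta(\alpha_j)-\alpha_j}\,\overline{T_{w_X}(E_{\tau(j)})}$. (As printed, \eqref{eq:AiAj-rel2} pairs both sets of monomials against $A_j$; the weight count shows the first pairing must in fact be against $A_i$, consistent with \cite[Proposition 6.3]{a-BalaKolb19}, and the paper's proof accordingly computes both $A_i$ and $A_j$. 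Your proposal does not engage with this.)

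The paper's actual argument is quite different and is where the induction hypothesis and the Satake-diagram structure enter. By the weight considerations above, both terms of \eqref{eq:AiAj-rel2} vanish unless $\mu=(1-a_{ij})\alpha_i+\alpha_j$ and $\Theta(\mu)=-\mu$, and then \cite[Lemma 6.4]{a-BalaKolb19} leaves only two possible configurations of $\Theta$ on $\{\alpha_i,\alpha_j\}$. In the first case, $\Theta(\alpha_i)=-\alpha_j$ with $a_{ij}=0$, one computes $A_i=(q_i-q_i^{-1})c_jE_j$ and $A_j=(q_i-q_i^{-1})c_iE_i$ explicitly, and the two terms of \eqref{eq:AiAj-rel2} cancel (here $c_i=c_j$ and $q_i=q_j$ are used). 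In the second case, $\Theta(\alpha_i)=-\alpha_i$ and $\Theta(\alpha_j)=-\alpha_j$, the support constraint of \cite[Lemma 6.5]{a-BalaKolb19} gives $\Xfrak_{\mu-2\alpha_i}=\Xfrak_{\mu-2\alpha_j}=0$, hence $A_i=A_j=0$. None of this case analysis appears in your proposal, and it cannot be replaced by an appeal to the Serre relation alone.
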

\begin{proof}
  As $A_i\in U^+_{\mu-\alpha_i}$ we may assume that $\mu=(1-a_{ij})\alpha_i+\alpha_j$ and $\Theta(\mu)=-\mu$ as otherwise both terms of \eqref{eq:AiAj-rel2} vanish. By \cite[Lemma 6.4]{a-BalaKolb19} it suffices to consider the following two cases:

  \noindent{\bf Case 1: $\Theta(\alpha_i)=-\alpha_j$ and $a_{ij}$=0.} In this case $\mu=\alpha_i+\alpha_j$ and $(\alpha_i,\alpha_k)=(\alpha_j,\alpha_k)=0$ for all $k\in X$, and hence
  \begin{align*}
     A_i=(q_i-q_i^{-1}) c_j E_j, \qquad A_j=(q_i-q_i^{-1}) c_i E_i.
  \end{align*}
  Hence the left hand side of \eqref{eq:AiAj-rel2} vanishes.

  \noindent{\bf Case 2: $\Theta(\alpha_i)=-\alpha_i$ and $\Theta(\alpha_j)=-\alpha_j$.} In this case induction on $\mu'$ for $\mu'<\mu$ shows that $\Xfrak_{\mu'}\neq 0$ implies that $\mu'\in \mathrm{span}_{\N_0}\{2\alpha_i, 2\alpha_j\}$,
  see \cite[Lemma 6.5]{a-BalaKolb19}. Hence for  $\mu=(1-a_{ij})\alpha_i+\alpha_j$ we have $\Xfrak_{\mu+\Theta(\alpha_i)-\alpha_i}=\Xfrak_{\mu-2\alpha_i}=0$ and similarly $\Xfrak_{\mu+\Theta(\alpha_j)-\alpha_j}=0$. Hence $A_i=A_j=0$ which implies \eqref{eq:AiAj-rel2} also in this case. 
\end{proof}  
With the above preparations the following theorem is proved just as in \cite[Theorem 6.10]{a-BalaKolb19} using Proposition \ref{prop:riir-conditions}.
\begin{thm}\label{thm:quasiK}
  There exists a uniquely determined element $\Xfrak=\sum_{\mu\in Q^+}\Xfrak_\mu\in \widehat{U^+}$ with  $\Xfrak_0=1$ and $\Xfrak_\mu\in U^+_\mu$ such that the equality
  \begin{align}\label{eq:quasiK}
    B_i \Xfrak = \Xfrak \big(F_i- (-1)^{2\alpha_i(\rho_X^\vee)}q^{-(\alpha_i,\Theta(\alpha_i)-2\rho_X)}c_{\tau(i)} \overline{T_{w_X}(E_{\tau(i)})} K_i\big) 
  \end{align}
  holds in $\sU$ for all $i\in I$. Moreover, the element $\Xfrak$ commutes with all elements of $U^0_\Theta\cM_X$.
\end{thm}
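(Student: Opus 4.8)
The plan is to build the homogeneous components $\Xfrak_\mu$ by induction on $\height(\mu)$, solving the recursions \eqref{eq:rec1}, \eqref{eq:rec2} at each step, and then to read off the intertwining relation \eqref{eq:quasiK} from Proposition \ref{prop:intertwiner}. Indeed, by the equivalence of (1) and (2) in Proposition \ref{prop:intertwiner}, relation \eqref{eq:quasiK} holds for a given $\Xfrak\in\widehat{U^+}$ if and only if its components satisfy \eqref{eq:rec1} and \eqref{eq:rec2} for all $i\in I$. It therefore suffices to produce a unique family $(\Xfrak_\mu)_{\mu\in Q^+}$ with $\Xfrak_\mu\in U^+_\mu$, $\Xfrak_0=1$, solving these recursions simultaneously.

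First I would clear the low-height cases. The value $\Xfrak_0=1$ is prescribed. In both recursions the component $\Xfrak_{\mu+\Theta(\alpha_i)-\alpha_i}$ occurs, and $\mu+\Theta(\alpha_i)-\alpha_i=\mu-(\alpha_i+w_X(\alpha_{\tau(i)}))$ because $\Theta(\alpha_i)=-w_X(\alpha_{\tau(i)})$. For $i\in I\setminus X$ the element $\alpha_i+w_X(\alpha_{\tau(i)})$ lies in $Q^+$ and has height at least $2$, while for $i\in X$ the scalar $c_i$ vanishes; in either case the right-hand sides of \eqref{eq:rec1}, \eqref{eq:rec2} drop the height by at least $2$. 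Consequently, for $\height(\mu)=1$, say $\mu=\alpha_k$, both right-hand sides vanish, so $r_i(\Xfrak_{\alpha_k})={}_ir(\Xfrak_{\alpha_k})=0$ for all $i$; since $U^+_{\alpha_k}=\field E_k$ and $r_k(E_k)=1$, this forces $\Xfrak_{\alpha_k}=0$. Thus the induction genuinely begins at height $2$, which is exactly the range covered by Proposition \ref{prop:riir-conditions}.

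For the inductive step I would fix $\mu$ with $\height(\mu)\ge 2$, assume the components $\Xfrak_{\mu'}$ for $\mu'<\mu$ have been built and satisfy \eqref{eq:rec1}, \eqref{eq:rec2}, and then form $A_i,{}_iA$ via \eqref{eq:Ai}, \eqref{eq:iA}. A short weight count shows $A_i,{}_iA\in U^+_{\mu-\alpha_i}$: the $w_X(\alpha_{\tau(i)})$-weight carried by $\overline{T_{w_X}(E_{\tau(i)})}$ cancels the $\Theta(\alpha_i)=-w_X(\alpha_{\tau(i)})$ shift in the index of $\Xfrak$. The two lemmas preceding the theorem establish precisely hypotheses (a) and (b) of Proposition \ref{prop:riir-conditions} for this data, so that proposition supplies a \emph{unique} $\Xfrak_\mu\in U^+_\mu$ with $r_i(\Xfrak_\mu)=A_i$ and ${}_ir(\Xfrak_\mu)={}_iA$ for all $i$, i.e. satisfying \eqref{eq:rec1}, \eqref{eq:rec2} at level $\mu$. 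This closes the induction and, because $\Xfrak_\mu$ is uniquely determined by the lower components together with the normalization $\Xfrak_0=1$, simultaneously yields uniqueness of $\Xfrak$.

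It remains to collect the conclusions. The intertwining relation \eqref{eq:quasiK} is now immediate from the implication (2)$\Rightarrow$(1) of Proposition \ref{prop:intertwiner}, and the commutation of $\Xfrak$ with $U^0_\Theta\cM_X$ follows from part (3) of the same proposition. The substance of the argument lies entirely in the two lemmas feeding Proposition \ref{prop:riir-conditions}---in particular the symmetry $r_i({}_jA)={}_jr(A_i)$, which hinges on Proposition \ref{prop:BK-conj}---so with those in hand the only remaining point requiring care is the well-foundedness of the induction, guaranteed by the strict height drop $\alpha_i+w_X(\alpha_{\tau(i)})\in Q^+$ noted above.
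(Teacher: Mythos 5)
Your proposal is correct and takes essentially the same route as the paper: induction on $\height(\mu)$, with the two preceding lemmas verifying hypotheses (a) and (b) of Proposition \ref{prop:riir-conditions} to produce a unique $\Xfrak_\mu$ at each step, and Proposition \ref{prop:intertwiner} then converting the recursions \eqref{eq:rec1}, \eqref{eq:rec2} into the intertwining relation \eqref{eq:quasiK} and the commutation with $U^0_\Theta\cM_X$. The details you make explicit (the vanishing of the height-one components and the weight count showing $A_i,{}_iA\in U^+_{\mu-\alpha_i}$) are precisely what the paper delegates to the proof of \cite[Theorem 6.10]{a-BalaKolb19}.
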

\begin{rema}
  We call the element $\Xfrak\in \sU$ from the above theorem the quasi $K$-matrix corresponding to the quantum symmetric pair coideal subalgebra $\Bc$.
The quasi $K$-matrix is invertible in $\widehat{U^+}=\prod_{\mu\in Q^+}U^+_\mu$ because $\Xfrak_0=1\neq 0$.
\end{rema}  
\section{The bar involution for quantum symmetric pairs, revisited}\label{sec:bar-revisited}
Recall that $\field=k(q)$. Following \cite[Section 7.3]{a-AV20p} for any $\bc=(c_i)_{i\in I\setminus X}\in \field^{I\setminus X}$ we define $\bc'=(c_i')_{i\in I\setminus X}\in \field^{I\setminus X}$ by
\begin{align*}
  c_i'=(-1)^{2\alpha_i(\rho_X^\vee)}q^{(\alpha_i,\Theta(\alpha_i)-2\rho_X)} \overline{c_{\tau(i)}}.
\end{align*}
Observe that $\bc\in \cC$ if and only if $\bc'\in \cC$. For $\bc\in \cC$ let $\cB_{\bc'}$ denote the quantum symmetric pair coideal subalgebra corresponding to the parameters $\bc'$. We denote the generators \eqref{eq:Bi-def} for $\cB_{\bc'}$ by $B_i'$ to distinguish them from the corresponding generators $B_i$ of $\cB_\bc$. We immediately obtain the following consequence of Theorem \ref{thm:quasiK}.
\begin{cor}\label{cor:bar}
  For any $\bc\in \cC$ there exists a $k$-algebra isomorphism $\Psi:\cB_{\bc'}\rightarrow \cB_{\bc}$ such that
  \begin{align*}
     \Psi|_{\cM_XU^0_\Theta}=\overline{\phantom{m}}|_{\cM_X U^0_\Theta} \qquad \mbox{and} \qquad \Psi(B_i')=B_i \quad \mbox{for all $i\in I\setminus X$.}
  \end{align*}
In particular, we have $\Psi(q)=q^{-1}$ and $\Psi(K_\beta)=K_{-\beta}$ for all $\beta\in Q^\Theta$.
\end{cor}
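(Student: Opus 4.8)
The plan is to build the isomorphism $\Psi$ directly from the quasi $K$-matrix $\Xfrak$ of Theorem \ref{thm:quasiK}, using conjugation combined with the Lusztig bar involution $\barU$. Concretely, I would define $\Psi(b) = \Xfrak\, \overline{b}\, \Xfrak^{-1}$ for $b\in \cB_{\bc'}$, where $\overline{\phantom{m}}$ is the bar involution $\barU$ on $\Uq$ (extended to act on $\sU$ in the natural way), and $\Xfrak^{-1}$ exists because $\Xfrak_0=1$ makes $\Xfrak$ invertible in $\widehat{U^+}$ as noted in the remark following Theorem \ref{thm:quasiK}. Since $\barU$ is a $k$-algebra automorphism of $\Uq$ and conjugation by a fixed invertible element is an algebra automorphism of $\sU$, the composite $b\mapsto \Xfrak\,\overline{b}\,\Xfrak^{-1}$ is automatically a $k$-algebra homomorphism; the substance of the proof is entirely in checking that it carries the generators of $\cB_{\bc'}$ to those of $\cB_{\bc}$, and in particular that its image is exactly $\cB_{\bc}$.

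The key computation is to verify $\Psi(B_i')=B_i$ for $i\in I\setminus X$. First I would compute $\overline{B_i'}$ explicitly. By the definition of $\bc'$ and the formula \eqref{eq:Bi-def} applied to the parameters $\bc'$, one has $B_i' = F_i - c_i' T_{w_X}(E_{\tau(i)})K_i^{-1}$; applying $\barU$ and using $\overline{F_i}=F_i$, $\overline{K_i^{-1}}=K_i$, and $\overline{c_i'}= (-1)^{2\alpha_i(\rho_X^\vee)}q^{-(\alpha_i,\Theta(\alpha_i)-2\rho_X)}c_{\tau(i)}$ (the bar of the defining expression for $c_i'$), I would obtain
\begin{align*}
  \overline{B_i'} = F_i - (-1)^{2\alpha_i(\rho_X^\vee)}q^{-(\alpha_i,\Theta(\alpha_i)-2\rho_X)}c_{\tau(i)}\,\overline{T_{w_X}(E_{\tau(i)})}\,K_i.
\end{align*}
This is precisely the element appearing on the right-hand side of the intertwining relation \eqref{eq:quasiK}. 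Therefore Theorem \ref{thm:quasiK} reads exactly as $B_i\,\Xfrak = \Xfrak\,\overline{B_i'}$, which rearranges to $\Xfrak\,\overline{B_i'}\,\Xfrak^{-1}=B_i$, i.e. $\Psi(B_i')=B_i$, as desired. For the remaining generators, for $i\in X$ the element $B_i'=F_i=B_i$ is bar-invariant and lies in $\cM_X$, and the final clause of Theorem \ref{thm:quasiK} states that $\Xfrak$ commutes with all of $U^0_\Theta\cM_X$, so $\Psi$ acts on $\cM_X U^0_\Theta$ simply as $\barU$; this simultaneously verifies the claimed formulas $\Psi|_{\cM_X U^0_\Theta}=\barU|_{\cM_X U^0_\Theta}$, the special values $\Psi(q)=q^{-1}$ and $\Psi(K_\beta)=K_{-\beta}$ for $\beta\in Q^\Theta$, and the $i\in X$ case of $\Psi(B_i')=B_i$.

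It remains to argue that $\Psi$ is a well-defined isomorphism onto $\cB_{\bc}$ rather than merely a homomorphism into $\sU$. Since the generators of $\cB_{\bc'}$ (namely $\cM_X$, $U^0_\Theta$, and the $B_i'$) all map into $\cB_{\bc}$ by the computations above, the image $\Psi(\cB_{\bc'})$ lands in $\cB_{\bc}$, and because these images are exactly the generating set of $\cB_{\bc}$ the map is surjective. For injectivity and to identify $\Psi$ as an isomorphism, I would invoke symmetry: applying the identical construction to the quasi $K$-matrix for the parameters $\bc'$ yields a homomorphism $\cB_{(\bc')'}\to\cB_{\bc'}$, and one checks $(\bc')'=\bc$ by a direct computation with the defining formula and the fact that $\barU$ is involutive, so the two maps are mutually inverse. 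The main obstacle I anticipate is purely bookkeeping: confirming that $\overline{c_i'}$ has exactly the coefficient that matches the right-hand side of \eqref{eq:quasiK}, and verifying $(\bc')'=\bc$, both of which require careful tracking of the sign $(-1)^{2\alpha_i(\rho_X^\vee)}$ and the $q$-power under the bar involution; no conceptual difficulty is expected beyond this sign-and-exponent accounting.
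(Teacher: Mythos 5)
Your main line of argument is exactly the paper's: define $\Psi(b)=\Xfrak\,\overline{b}\,\Xfrak^{-1}$, observe that $\overline{B_i'}$ is precisely the right-hand side of \eqref{eq:quasiK} so that the theorem reads $B_i\Xfrak=\Xfrak\,\overline{B_i'}$, and use the commutation of $\Xfrak$ with $U^0_\Theta\cM_X$ (Proposition \ref{prop:intertwiner}(3)) to get $\Psi|_{\cM_XU^0_\Theta}=\barU|_{\cM_XU^0_\Theta}$. The explicit computation of $\overline{c_i'}$ and of $\overline{B_i'}$ is correct, and your surjectivity argument (generators map onto generators, since $\barU$ preserves the generating set of $\cM_XU^0_\Theta$) is what the paper leaves implicit.

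The one place where you deviate is the injectivity argument, and there you have a gap. To show that the analogous map $\Psi':\cB_{\bc}\rightarrow\cB_{\bc'}$ built from the quasi $K$-matrix $\Xfrak'$ of $\cB_{\bc'}$ is inverse to $\Psi$, you would need to expand $\overline{\Xfrak'\,\overline{b}\,(\Xfrak')^{-1}}$ as $\overline{\Xfrak'}\,b\,\overline{(\Xfrak')^{-1}}$ and then argue that $\Xfrak\,\overline{\Xfrak'}$ commutes with all of $\cB_\bc$ (in effect, that $\overline{\Xfrak'}=\Xfrak^{-1}$ up to such a central factor). This requires extending $\barU$ multiplicatively to the completion $\widehat{U^+}\subset\sU$ and invoking the uniqueness statement of Theorem \ref{thm:quasiK}; it is genuinely more than the ``sign-and-exponent accounting'' you anticipate, and you do not carry it out. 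Fortunately the detour is unnecessary: $\Psi$ is injective for free, being the composite of the restriction to $\cB_{\bc'}$ of the automorphism $\barU$ of $\Uq$ with conjugation by the invertible element $\Xfrak\in\widehat{U^+}$, both of which are injective. With that replacement your proof is complete and coincides with the paper's.
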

\begin{proof}
  With the above notation, Equation \eqref{eq:quasiK} can be rewritten as
  \begin{align}\label{eq:quasiK2}
    B_i\Xfrak = \Xfrak \overline{B_i'} \qquad \mbox{for all $i\in I\setminus X$}
  \end{align}
  Hence there is a well-defined $k$-algebra homomorphism $\Psi:\cB_{\bc'} \rightarrow \cB_\bc$ such that $\Psi(b)=\Xfrak \overline{b} \Xfrak^{-1}$ for all $b\in \cB_{\bc'}$. Equation \eqref{eq:quasiK2} implies that $\Psi(B_i')=B_i$ for all $i\in I\setminus X$. Property (3) in Proposition \ref{prop:intertwiner} implies that $\Psi|_{\cM_XU^0_\Theta}=\overline{\phantom{m}}|_{\cM_X U^0_\Theta}$.
\end{proof}
In the special case that $\bc=\bc'$ the above corollary provides the desired bar involution for $\cB_\bc$.
\begin{cor}\label{cor:bar-revisited}
  Assume that the parameters $\bc=(c_i)_{i\in I\setminus X}\in \field^{I\setminus X}$ satisfy the relation
  \begin{align}\label{eq:ci-condition}
     \overline{c_i}= (-1)^{2\alpha_i(\rho_X^\vee)}q^{-(\alpha_i,\Theta(\alpha_i)-2\rho_X)}c_{\tau(i)}
  \end{align}
  for all $i\in I\setminus X$. Then there exists a $k$-algebra automorphism
  \begin{align*}
     \barB:\Bc\rightarrow \Bc, \quad b\mapsto \overline{b}^B
  \end{align*}
  such that $\barB|_{U^0_\Theta\cM_X}=\overline{\phantom{m}}|_{U^0_\Theta\cM_X}$ and $\overline{B_i}^B=B_i$ for all $i\in I\setminus X$. In particular, the automorphism $\barB$ of $\Bc$ satisfies the relation $\overline{q}^B=q^{-1}$.
\end{cor}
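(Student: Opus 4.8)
The plan is to recognise that the hypothesis \eqref{eq:ci-condition} is nothing but the fixed-point condition $\bc=\bc'$ for the parameter involution $\bc\mapsto\bc'$ defined just before Corollary \ref{cor:bar}, and then to read off $\barB$ directly from that corollary with no further work. So the first step is to verify that \eqref{eq:ci-condition} is equivalent to $c_i=c_i'$ for all $i\in I\setminus X$. This is immediate: the defining relation is
\begin{align*}
  c_i' = (-1)^{2\alpha_i(\rho_X^\vee)}\,q^{(\alpha_i,\Theta(\alpha_i)-2\rho_X)}\,\overline{c_{\tau(i)}},
\end{align*}
so imposing $c_i=c_i'$ and applying the bar involution to both sides gives exactly \eqref{eq:ci-condition}, because $\overline{\phantom{m}}$ is an involution that fixes $k$ (hence the sign $(-1)^{2\alpha_i(\rho_X^\vee)}$) and sends $q^n$ to $q^{-n}$; since the bar is an involution, the converse is the same computation read backwards. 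Notably, no identity among the roots is required for this equivalence.

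With $\bc=\bc'$ established, Corollary \ref{cor:bar} applies verbatim with $\cB_{\bc'}=\Bc$ and furnishes a $k$-algebra isomorphism $\Psi:\Bc\to\Bc$ satisfying $\Psi|_{\cM_X U^0_\Theta}=\overline{\phantom{m}}|_{\cM_X U^0_\Theta}$ and $\Psi(B_i')=\Psi(B_i)=B_i$ for all $i\in I\setminus X$. I would simply set $\barB:=\Psi$; the asserted properties, including $\overline{q}^B=\Psi(q)=q^{-1}$ and $\overline{K_\beta}^B=K_{-\beta}$ for $\beta\in Q^\Theta$, are then precisely the conclusions of Corollary \ref{cor:bar}. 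To justify the name ``involution'' one checks that $\barB^2$ is a $k$-algebra automorphism of $\Bc$ with $\barB^2(q)=q$, fixing $\cM_X U^0_\Theta$ pointwise (the restriction of the Lusztig bar involution there squares to the identity) and fixing each $B_i$; as these elements generate $\Bc$, this forces $\barB^2=\id$.

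The only bookkeeping to keep track of is the standing requirement $\bc\in\cC$, needed for $\Bc$ to be defined at all; here one uses the observation (recorded before Corollary \ref{cor:bar}) that $\bc\in\cC$ if and only if $\bc'\in\cC$, so the fixed-point locus $\bc=\bc'$ is consistent with membership in $\cC$. Beyond this, there is essentially no obstacle at this stage: all of the substantive content, namely the existence and uniqueness of the quasi $K$-matrix $\Xfrak$ and the intertwining relation \eqref{eq:quasiK2}, has already been absorbed into Theorem \ref{thm:quasiK} and Corollary \ref{cor:bar}. The single point that repays care, and the nearest thing to a difficulty, is confirming that the sign $(-1)^{2\alpha_i(\rho_X^\vee)}$ and the power $q^{-(\alpha_i,\Theta(\alpha_i)-2\rho_X)}$ appearing in \eqref{eq:ci-condition} match those in the definition of $\bc'$ exactly, so that \eqref{eq:ci-condition} really is the fixed-point equation $\bc=\bc'$ rather than a merely similar-looking relation.
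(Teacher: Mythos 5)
Your proposal is correct and takes essentially the same approach as the paper: the paper obtains Corollary \ref{cor:bar-revisited} precisely by observing that \eqref{eq:ci-condition} is the fixed-point condition $\bc=\bc'$ and then specializing Corollary \ref{cor:bar} to that case. Your extra check that $\barB$ squares to the identity is a valid (and harmless) addition beyond what the statement requires.
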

\providecommand{\bysame}{\leavevmode\hbox to3em{\hrulefill}\thinspace}
\providecommand{\MR}{\relax\ifhmode\unskip\space\fi MR }
\providecommand{\MRhref}[2]{%
  \href{http://www.ams.org/mathscinet-getitem?mr=#1}{#2}
}
\providecommand{\href}[2]{#2}

\end{document}